\newcommand{\R}{\mathbb{R}}
\begin{document}
\title{Nonholonomic systems with
inequality constraints\thanks{The authors acknowledge financial support from Grant PID2019-106715GB-C21 funded by MCIN/AEI/ 10.13039/501100011033.}}
%
%
\author{
Alexandre Anahory Simoes\inst{1} \and
Leonardo Colombo\inst{2}}
\authorrunning{A. Anahory and L. Colombo}
%
\institute{
School of Science and Technology, IE University, Spain.
\email{alexandre.anahory@ie.edu}\\ \and
 Centre for Automation and Robotics (CSIC-UPM), Ctra. M300 Campo Real, Km 0,200, Arganda
del Rey - 28500 Madrid, Spain. \email{leonardo.colombo@csic.es}}
\maketitle              
\begin{abstract}
In this paper we derive the equations of motion for nonholonomic systems subject to inequality constraints, both, in continuous-time and discrete-time. The last is done by discretizing the continuous time-variational principle which defined the equations of motion for a nonholonomic system subject to inequality constraints. An example is show to illustrate the theoretical results.
\keywords{Nonholonomic Systems  \and Inequality constraints \and Variational integrators.}
\end{abstract}
\section{Introduction}

Some mechanical systems have a restriction either on the configurations that the system may assume or at the velocities the system is allowed to go. Systems with such restrictions are generally called constrained systems. Nonholonomic  systems  \cite{Bloch,generalized,dLMdD1996,Neimark} are, roughly speaking, mechanical systems with constraints on their velocity that are not derivable from position constraints. They arise, for instance, in mechanical systems that have rolling contact (e.g., the rolling of wheels without slipping) or certain kinds of sliding contact.

Mechanical systems subject to inequality constraints are confined within a region of space with boundary. Collision with the boundary activates constraint forces forbiding the system to cross the boundary into a non-admissible region of space. Inequality constraints appear for instance in the problem of rigid-body collisions, mechanical grasping models and biomechanical locomotion \cite{simoes,lopez}. 

Structure preserving integration of systems with inequality constraints has been addressed in many papers due to its applicability in engineering problems that require nonsmooth techniques (see \cite{kaufman}). In \cite{Fetecau,kaufman}, the authors use variational techniques to deduce the equations of motion and integrators for unconstrained mechanical systems with inequality constraints. In \cite{Fetecau}, the authors extend the space of solutions to a non-autonomous space depending on time in order to remove the non-smoothness during the collision with the boundary. However, in \cite{kaufman}, the authors use nonsmooth analysis to deal with collisions and obtain better structure preservation: for instance, nearly energy conservation.

In this paper we consider nonholonomic systems subject to inequality constraints. The prototype example we examine is that of a wheel rolling without sliding inside a circular table. We extend the technique in \cite{Fetecau} to derive the equations of motion via an adaptation of Lagrange-D'Alembert principle for nonholonomic systems subject to inequality constraints and then we use a modification of discrete Lagrange-d'Alembert principle \cite{CM2001} to derive variational integrators for these systems. Although dealing smoothly with the impact with the boundary, our integrator suffers from the same problems as the ones identified in \cite{kaufman}, in particular, non-conservation of energy during the impact. However, we consider that this paper introduces a first approach to the geometric integration of nonholonomic systems with inequality constraints and motivates the search for other strategies such as DELI equations (see \cite{kaufman}) for nonholonomic systems.

The remainder of the paper is as follows: section $2$ introduces mechanical systems with inequality constraints. In Section $3$, we review nonholonomic systems and introduce the variational principle that gives the equations of motion for nonholonomic systems with inequality constraints. In Section $4$, we develop the discrete counterpart of the results in the preceding section. Finally, in Section $5$, we examine the example of a disk rolling without slipping in a circular table.

\section{Mechanical systems with inequality constraints}

In this paper, we will analyse the dynamics of nonholonomic systems evolving on the configuration manifold $Q$ which are subjected to inequality constraints, i.e., constraints determined by a submanifold with boundary $C$ of the manifold $Q$. The boundary $\partial C$ is a smooth manifold of $Q$ with codimension $1$. Locally, the boundary $\partial C$ is a smooth manifold of the type
$\partial C = \{ q\in Q \ | \ g(q) = 0 \}$
and the manifold $C$ is
$C = \{ q\in Q \ | \ g(q) \leqslant 0 \}$
for some smooth function $g:Q \rightarrow \mathbb{R}$.


In convex geometry, given a closed convex set $K$ of $\mathbb{R}^{n}$, the \textit{polar cone} of $K$ is the set
$K^{p} = \{z\in \mathbb{R}^{n} \ | \ \langle z, y \rangle \leqslant 0, \forall y \in K \}$ (see \cite{brogliato}).
The \textit{normal cone} to $K$ at a point $x\in K$ is given by
$N_{K}(x) = K^{p} \cap \{x\}^{T}$, 
where $\{x\}^{T}$ is the orthogonal subspace to $x$ with respect to the Euclidean inner product.

Based on this construction, we will only use a minimal definition of normal cone suiting the kind of inequality constraints we will be dealing with. Given a submanifold with boundary $C$ as before, the normal cone to a point $q\in \partial C$ is the set
$N_{C}(q)=\{ \lambda dg(q) | \lambda \geqslant 0\}$.
The two definitions match, if $C$ is a closed convex set of $\R^{n}$ with boundary being a hypersurface of dimension $n-1$.


Given a Lagrangian function $L:TQ\to\mathbb{R}$ describing the dynamics, with local coordinates $(q^i,\dot{q}^i)$, $i=1,\ldots,n=\dim Q$, the equations of motion under the presence of  inequality constraints are given by Euler-Lagrange equations
$\displaystyle{\frac{d}{dt}\frac{\partial L}{\partial \dot{q}^{i}} - \frac{\partial L}{\partial q^{i}}= 0}$
whenever the trajectory is in the interior of the constraint submanifold $C\setminus \partial C$. At impact times $t_{i}\in \mathbb{R}$ of the trajectory with the boundary $q(t_{i})\in \partial C$, there is a discontinuity in the state variables of the system, often called a jump. This jump is determined by the equations:
\begin{equation}\label{jump:equations}
    \begin{split}
        & \frac{\partial L}{\partial \dot{q}}|_{t=t_{i}^{+}} - \frac{\partial L}{\partial \dot{q}}|_{t=t_{i}^{-}} \in -N_{C}, \,\,\, E_{L}|_{t=t_{i}^{+}} = E_{L}|_{t=t_{i}^{-}}.
    \end{split}
\end{equation}

\begin{remark}
    We note that a negative sign in the previous equation appears as a consequence of the non-interpenetrability of the constraint.i.e., the mechanical system may not cross the boundary of the admissible variational constraint. We will see exactly how the negative signs appears in the following section.
\end{remark}

Throuhgout the paper, $L$ will be a regular mechanical Lagrangian, i.e., it has the form kinetic minus potential energy \cite{Bloch} and the Legendre transform $\mathbb{F}L:TQ\rightarrow T^{*}Q$ with $\mathbb{F}L(q,\dot{q})=(q, \frac{\partial L}{\partial \dot{q}})$ is a local diffeomorphism.

\section{Nonholonomic systems with inequality constraints}

Assume that there are velocity constraints imposed on the system. We will restrict to constraints that are linear in the velocities. Consider a distribution $\mathcal{D}$ on the configuration space $Q$ describing these constraints, that is, $\mathcal{D}$ is a collection of linear subspaces of $TQ$ ($\mathcal{D}_q\subset T_{q}Q$ for each $q\in Q$). A curve $q(t)\in Q$ will be said to satisfy the constraints if $\dot{q}(t)\in\mathcal{D}_{q(t)}$ for all $t$. Locally, the constraint distribution can be written as
$\mathcal{D}=\{\dot{q}\in TQ|\mu_{i}^{a}(q)\dot{q}^{i}=0,\quad a=1,\ldots,m\}$.


The Lagrange-d'Alembert equations of motion for the system are those determined by $\delta\int_{a}^{b}L(q,\dot{q})dt=0,$ where we choose variations $\delta q(t)$ of the curve $q(t)$ that satisfy $\delta q(a)=\delta(b)=0$ and $\delta q(t)\in\mathcal{D}_{q(t)}$ for each $t\in[a,b]$. Note that here the curve $q(t)$ itself satisfies the constraints. Variations are taken before imposing the constraints and hence, the constraints are not imposed on the family of curves defining the variations.



The nonholonomic equations of motion are obtained from Lagrange-d'Alembert principle and its local expression is
\begin{equation}\label{LdA:eq}
    \begin{split}
        & \frac{d}{dt}\frac{\partial L}{\partial \dot{q}^{i}} - \frac{\partial L}{\partial q^{i}}= \lambda_{a}\mu^{a}_{i}, \,\,\quad \mu_{i}^{a}(q)\dot{q}^{i}=0
    \end{split}
\end{equation}
where $\lambda_{a}$ is a Lagrange multiplier that might be computed using the constraints.




If $C$ is an inequality constraint on the nonholonomic system, then Lagrange-d'Alembert equations are still valid in the interior of $C$. However, the jump conditions must now be changed to accommodate the constraints our system has on velocities as we will see in the following result.

\begin{theorem}
    Let $q:[0,h]\rightarrow Q$ be a nonholonomic trajectory of the nonholonomic system $(L,\mathcal{D})$ subjected to the inequality constraint $q(t)\in C$. Suppose that this system has an impact against the boundary $\partial C$ at the time $t_{i}\in [0,h]$. Then the trajectory satisfies Lagrange-d'Alembert equations \eqref{LdA:eq} in the intervals $[0, t_{i}^{-}[$ and $]t_{i}^{+},h]$ and at the impact time $t_{i}$, the following conditions hold:
    \begin{equation}\label{nh:jump:equations}
        \begin{split}
            & \frac{\partial L}{\partial \dot{q}}|_{t=t_{i}^{+}} - \frac{\partial L}{\partial \dot{q}}|_{t=t_{i}^{-}} \in -N_{C}\cup \mathcal{D}^{o},\,\,\, E_{L}|_{t=t_{i}^{+}} = E_{L}|_{t=t_{i}^{-}},\,\,\,    \dot{q}(t_{i}^{+}) \in \mathcal{D}_{q(t_{i}^{+})},
        \end{split}
    \end{equation} where $\mathcal{D}^{o}$ denotes the anihilator of the distribution $\mathcal{D}$.
\end{theorem}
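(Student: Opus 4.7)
The approach is to extend the time-reparametrization technique of \cite{Fetecau} to accommodate the velocity distribution $\mathcal{D}$. Away from the impact, on the intervals $[0, t_i^-)$ and $(t_i^+, h]$, the trajectory lies in the interior $C \setminus \partial C$, so the inequality constraint is inactive and the ordinary Lagrange-d'Alembert principle with variations $\delta q \in \mathcal{D}$ vanishing at the endpoints directly yields equations \eqref{LdA:eq}; this part is standard and requires no new input.

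For the jump at $t_i$, I would reparametrize by an auxiliary variable $\tau \in [0,1]$ and consider curves $(t(\tau), q(\tau))$ with $t'(\tau) \geq 0$, defining the extended Lagrangian $\tilde L(t,q,t',q') = L(q, q'/t')\, t'$ so that the action becomes $\int_0^1 \tilde L\, d\tau$. The variational problem is then to extremize this action subject to the pointwise inequality $g(q(\tau)) \leq 0$ and the nonholonomic constraint $\mu^a_i(q)\, q'^i = 0$, among variations $\delta q(\tau) \in \mathcal{D}_{q(\tau)}$ and $\delta t(\tau)$ arbitrary, both vanishing at $\tau = 0, 1$. After splitting the integral at the impact parameter $\tau_i$ (characterized by $g(q(\tau_i)) = 0$) and integrating by parts on each subinterval, one recovers the Lagrange-d'Alembert equations plus boundary contributions at $\tau_i$ of the form
\[ \bigl\langle [\partial L/\partial \dot q],\, \delta q(\tau_i)\bigr\rangle \; -\; [E_L]\, \delta t(\tau_i), \]
where $[\,\cdot\,]$ denotes the jump of a quantity across $\tau_i$.

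To close the derivation, I would examine the cone of admissible one-sided variations at the impact, namely those $\delta q(\tau_i) \in \mathcal{D}_{q(\tau_i)}$ satisfying $dg(q(\tau_i))\cdot \delta q(\tau_i) \leq 0$, together with arbitrary $\delta t(\tau_i)$. The arbitrariness of $\delta t(\tau_i)$ forces $[E_L] = 0$, yielding energy preservation. Dualizing the one-sided constraint on $\delta q(\tau_i)$ inside $\mathcal{D}_{q(\tau_i)}$ places the momentum jump into the stated set $-N_C \cup \mathcal{D}^o$, since covectors in $\mathcal{D}^o$ pair trivially with all admissible variations while covectors proportional to $-dg$ produce the correct sign against the one-sided term. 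The post-impact admissibility $\dot q(t_i^+) \in \mathcal{D}_{q(t_i^+)}$ follows because $q$ satisfies the nonholonomic constraint on $(t_i^+, h]$, so $\mu^a_i(q)\dot q^i = 0$ in the right limit.

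The main obstacle is the third step: correctly combining the KKT-type one-sided inequality produced by $g \leq 0$ with the subspace restriction $\delta q \in \mathcal{D}$, and identifying the resulting dual object as $-N_C \cup \mathcal{D}^o$. One must in particular justify that a single impulsive covector can simultaneously encode the normal contact impulse and the nonholonomic reaction while preserving total energy; this compatibility uses regularity of $L$ (so the Legendre transform is invertible and $E_L$ is a genuine function of $(q,\dot q)$) and transversality of $\partial C$ to the distribution $\mathcal{D}$ at the impact point, which guarantees that the admissible cone of variations is nonempty and not degenerate.
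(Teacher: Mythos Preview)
Your overall framework coincides with the paper's: both extend the Fetecau-type impact variational principle by restricting admissible variations to $\mathcal{D}$, split the action at the impact, integrate by parts on each piece, and read off the Lagrange--d'Alembert equations on the open intervals together with boundary terms at the impact. Your derivation of energy conservation from the arbitrariness of $\delta t(\tau_i)$ in the reparametrized picture is a legitimate variant of what the paper does; the paper instead works in the original time variable with $t_i$ itself as an unknown, observes that the condition $q(t_i)\in\partial C$ forces $\delta q(t_i)+\dot q(t_i)\,\delta t_i\in T(\partial C)$, and uses the particular admissible pair $\delta t_i=1$, $\delta q(t_i)=-\dot q(t_i)$ to obtain $\big[\tfrac{\partial L}{\partial\dot q}\dot q-L\big]_{t_i^-}^{t_i^+}=0$.

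The genuine gap is in your third step, the sign of the normal impulse. You propose to get $-N_C$ by dualizing the one-sided cone $\{\delta q\in\mathcal{D}:dg(\delta q)\le 0\}$, but Lagrange--d'Alembert is a \emph{stationarity} principle, not an extremality principle, so one-sided variations do not produce KKT-type inequalities; only two-sided variations $\delta q\in T(\partial C)\cap\mathcal{D}$ are usable, and these yield merely $[\mathbb{F}L]\in\big(T(\partial C)\cap\mathcal{D}\big)^{o}$, which contains both $N_C$ and $-N_C$. The paper fixes the sign by an entirely different, non-variational argument: from $g(q(t))\le 0$ and $g(q(t_i))=0$ one has $dg(\dot q(t_i^-))\ge 0\ge dg(\dot q(t_i^+))$; assuming $[\mathbb{F}L]=\lambda\,dg$ with $\lambda\ge 0$, one inverts the Legendre transform (using that $L$ is regular mechanical, so $\mathbb{F}L^{-1}$ is the inverse kinetic metric, positive definite) to get $\dot q^+-\dot q^-=\lambda(\mathbb{F}L)^{-1}(dg)$, and pairing with $dg$ forces $dg(\dot q^+)\ge dg(\dot q^-)\ge 0$, a contradiction. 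This is where regularity of $L$ actually enters, not in the variational step. Your added transversality hypothesis on $\partial C$ and $\mathcal{D}$ is not used in the paper's argument.
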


\begin{proof}
    The Lagrange-d'Alembert principle for systems with impacts is defined on the path space
    $\Omega = \{ (c,t_{i}) \ | \ c:[0,h]\rightarrow Q \text{ is a smooth curve and } t_{i}\in \mathbb{R}\}$.

    If the mapping $\mathcal{A}:\Omega \rightarrow \mathbb{R}$ is the action then, the Lagrange,d'Alembert principle states that the derivative of the action should annihilate all variations $(\delta q, \delta t_{i})$ with $\delta q \in \mathcal{D}$. Since,
\begin{equation*}
        \delta \mathcal{A} = \int_{0}^{t_{i}^{-}} \left[ \frac{\partial L}{\partial q^{i}} - \frac{d}{dt}\frac{\partial L}{\partial \dot{q}^{i}}\right] \delta q \ dt + \int_{t_{i}^{+}}^{h} \left[ \frac{\partial L}{\partial q^{i}} - \frac{d}{dt}\frac{\partial L}{\partial \dot{q}^{i}}\right] \delta q \ dt - \left[ \frac{\partial L}{\partial \dot{q}^{i}}\delta q + L \delta t_{i} \right]_{t_{i}^{-}}^{t_{i}^{+}}
\end{equation*}
the fact that Lagrange-d'Alembert equations hold on the intervals $[0, t_{i}^{-}[$ and $]t_{i}^{+},h]$ follows from the application of the fundamental theorem of calculus of variations together with the fact that $\delta q \in \mathcal{D}$. The jump condition follows from the fact that $q(t_{i})\in \partial C$ from where
$\delta (q(t_{i})) \in T(\partial C) \implies \delta q (t_{i}) + \dot{q}(t_{i})\delta t_{i} \in T(\partial C)$.

The variations satisfying the previous equation are spanned by variations $\delta q (t_{i}) \in T(\partial C)$ and $\delta t_{i} = 0$ or $\delta t_{i} = 1$ and $\delta q (t_{i}) = - \dot{q}(t_{i})$. From the latter we immediately deduce that
$\displaystyle{\left[ \frac{\partial L}{\partial \dot{q}^{i}}\dot{q} - L \right]_{t_{i}^{-}}^{t_{i}^{+}} = 0}$,
which is the energy conservation condition in the jump equations. From $\delta t_{i}=0$, we get that 
$$\frac{\partial L}{\partial \dot{q}}|_{t=t_{i}^{+}} - \frac{\partial L}{\partial \dot{q}}|_{t=t_{i}^{-}} = \mathbb{F}L|_{t=t_{i}^{+}} - \mathbb{F} L|_{t=t_{i}^{-}}$$
annihilates $\delta q$ if either it is on the annihilator of $\partial C$ or it belongs to the annihilator of the distribution $\mathcal{D}$, since $\delta q$ is in $T(\partial C)\cap \mathcal{D}$.

Now, in order to have $g(q(t))\leqslant 0$ and since $g(q(t_{i}))=0,$ we must have that $dg(\dot{q}(t_{i}^{-}))\geqslant 0$ and $dg(\dot{q}(t_{i}^{+})) \leqslant 0$, otherwise $q(t)$ would violate the inequality constraint. Noting that $(\partial C)^{o}$ is the union of $N_{C}$ and $-N_{C}$, let us show that $\mathbb{F}L|_{t=t_{i}^{+}} - \mathbb{F} L|_{t=t_{i}^{-}}$ is not in $N_{C}$. Suppose it was on the normal cone then
$$\mathbb{F}L|_{t=t_{i}^{+}} - \mathbb{F} L|_{t=t_{i}^{-}} = \lambda dg(q_{i}), \ \lambda \geqslant 0.$$
This is equivalent to $\dot{q}(t_{i}^{+}) - \dot{q}(t_{i}^{-}) = \lambda (\mathbb{F}L)^{-1}(dg(q_{i}))$. 
Applying $dg(q_{i})$ to both sides of the equation we get
$dg(\dot{q}(t_{i}^{+})) = dg(\dot{q}(t_{i}^{-})) + \lambda dg(q_{i})( (\mathbb{F}L)^{-1}(dg(q_{i})) )$, 
where the right-hand side is greater or equal than $0$, which is not possible. Therefore,
$\mathbb{F}L|_{t=t_{i}^{+}} - \mathbb{F} L|_{t=t_{i}^{-}} \in -N_{C}.$ This is precisely the first jump equation. The third one follows from the nonholonomic constraints.\hfill$\square$
\end{proof}



\begin{remark}
    The previous jump equations are in accordance with the equations obtained in \cite{Clark} from Weierstrass-Erdemann conditions for impacts.
\end{remark}

\section{Nonholonomic integrators for systems with inequality constraints}


We review here the formalism proposed in \cite{CM2001} (see also \cite{Cortes}) which gives rise to the discrete Lagrange-d'Alembert equations. Consider the discrete Lagrangian function $L_{d}:Q\times Q \times \mathbb{R} \rightarrow\mathbb{R}$ on the discrete velocity space $Q\times Q$. Let $\mathcal{D}$ be a distribution on $Q$ and consider a discrete constraint space $\mathcal{D}_{d}\subseteq Q\times Q$ whose dimension agrees with that of the distribution $\mathcal{D}$ as a submanifold of $TQ$, $\text{dim} \mathcal{D}_{d}=\text{dim} \mathcal{D}$ and such that the diagonal set of $Q\times Q$ is contained in the discrete constraint space, $(q,q)\in \mathcal{D}_{d}$ for all $q\in Q$.

Then, the discrete Lagrange-d'Alembert principle asserts that the discrete flow is a critical value of the discrete action map $S_d:C_d^{N} (Q)\rightarrow \R$, which is given by  $\displaystyle{S_d(q_d)=\sum_{k=0}^{N-1} L_{d}(q_k,q_{k+1},h)}$, but this time we impose the restriction $\delta q_{k}\in \mathcal{D}_{q_{k}}$, that is, the infinitesimal variation of the sequence must lie in the constraint distribution.  Lagrange-d'Alembert principle states the following:

\begin{definition}[Discrete Lagrange-d'Alembert principle]
	The discrete flow of the discrete nonholonomic Lagrangian system determined by the discrete Lagrangian function $L_{d}$, the distribution $\mathcal{D}$ and the discrete constraint space $\mathcal{D}_{d}$ satisfies the constraint $(q_{k},q_{k+1})\in \mathcal{D}_{d}$ for all $k\in \{0,...,N-1\}$ and is a critical value of the discrete action map $S_{d}$ among all variations of sequences with fixed end-points whose infinitesimal variations satisfy $\delta q_{k}\in \mathcal{D}_{q_{k}}$.
\end{definition}

As it happens with its continuous counterpart, the application of the discrete Lagrange-d'Alembert principle leads to a set of equations which will be the necessary and sufficient conditions to find critical values subordinated to the imposed restrictions. Assume in the following that $\mu^{a}\in \Omega^{1}(Q)$ with $a=1,...,n-k$ are 1-forms on $Q$ defining the distribution $\mathcal{D}=\{v\in TQ \ | \ \mu^{a}(v)=0\}$ and $\mu_{d}^{a}$ are a set of $n-k$ functions on $Q\times Q$ whose zero set is the discrete constraint space $\mathcal{D}_{d}$.

A sequence $\{q_{k}\}_{k=1}^{N}$ of points in $Q$ satisfies the discrete-Lagrange d'Alembert principle for the triple $(L_{d},\mathcal{D},\mathcal{D}_{d})$ if and only if it satisfies the equations
	\begin{equation}\label{DLA}
	\begin{split}
	& D_{2}L_{d}(q_{k-1},q_{k},h)+D_{1}L_{d}(q_{k},q_{k+1},h)=\lambda_{a}\mu^{a}(q_{k}),\,\,\,
	 \mu^{a}_{d}(q_{k},q_{k+1})=0.
	\end{split}
	\end{equation}


\subsection{Discrete equations with inequality constraints}

Consider a sequence of points $\{q_{k}\}_{k=0}^{N}$ contained in the inequality constraint set $C$. This sequence shall be considered as a discretization of a continuous smooth curve $q:[0, Nh]\rightarrow C$ satisfying $q(kh)=q_{k}$. In fact, since we will also need the time sequence we will use the notation $t_{k}=kh$. Now suppose that this curve has an impact against the boundary of $C$ at the point $\bar{q}\in \partial C$ and that this impact occurs at time $\bar{t} := t_{i-1} + \alpha h$, for some $\alpha \in ]0,1[$ and $i\in\{1, \dots, N\}$, so that $t_{i-1}<\bar{t}<t_{i}$. We will also use the notation $q_{d}:\{t_{0}, \dots, t_{i-1}, \bar{t}, t_{i}, \dots, t_{N}\} \rightarrow Q$ to denote the sequence $\{q_{k}\}_{k=0}^{N}\cup \{\bar{q}\}$ in functional notation.

In the following we will consider the discrete path space $\mathcal{M}_{d}$ formed by sequences such as the one described in the last paragraph:
\begin{equation*}
    \mathcal{M}_{d} = ]0,1[ \times \{ q_{d}:\{t_{0}, \dots, t_{i-1}, \bar{t}, t_{i}, \dots, t_{N}\} \rightarrow Q \ | \ \bar{q}\in \partial C \ \}.
\end{equation*}
This discrete path space is actually a manifold since it is isomorphic to  $]0,1[\times Q \times \cdots \times \partial C \times \cdots \times Q$.

To obtain the discrete equations of motion of a nonholonomic system under inequality constraints we must find the number $\alpha \in ]0,1[$ and the sequence $q_{d}$ such that the differential of the action $S_d:\mathcal{M}_{d}\rightarrow \R$, given by
\begin{equation*}
    \begin{split}
        S_d(\alpha, q_d)=\sum_{k=0}^{i-2} L_{d}(q_k,q_{k+1},h) & + \sum_{k=i}^{N-1} L_{d}(q_k,q_{k+1},h) + \\
        & L_{d}(q_{i-1}, \bar{q}, \alpha h) + L_{d}(\bar{q}, q_{i}, (1-\alpha)h)
    \end{split}
\end{equation*}
annihilates variations $(\delta \alpha, \delta q_{d}) \in T \mathcal{M}_{d}$ satisfying $\delta q_{d} \in \mathcal{D}$, i.e., $\delta q_{i}, \delta \bar{q} \in \mathcal{D}$, where $\delta q_{d} = (\delta q_{1}, \dots, \delta q_{i-1}, \delta \bar{q}, \delta q_{i}, \dots, \delta q_{N})$, $\delta q_{0} = \delta q_{N} = 0$ and $(q_{i-1},\bar{q})$, $(\bar{q}, q_{i})$, $(q_{k}, q_{k+1})\in \mathcal{D}_{d}$ for all $k\neq i-1$.

\begin{theorem}
    Let $\{q_{k}\}$ be a nonholonomic discrete trajectory of the nonholonomic system $(L_{d},\mathcal{D}, \mathcal{D}_{d})$ subjected to the inequality constraint $q_{k}\in C$. Suppose that this system has an impact against the boundary $\partial C$ at the time $\bar{t}\in [0,Nh]$. Then the trajectory satisfies discrete Lagrange-d'Alembert equations \eqref{DLA} for $k \neq i-1, i, $ and at the impact time $\bar{t}$, the following conditions hold:
    \begin{equation}\label{dnh:jump:equations}
        \begin{split}
            & D_{2}L_{d}(q_{i-2},q_{i-1},h) + D_{1}L_{d}(q_{i-1}, \bar{q}, \alpha h) = \lambda_{a}\mu^{a}(q_{i-1}) \\
            & D_{2}L_{d}(\bar{q},q_{i},(1-\alpha)h) + D_{1}L_{d}(q_{i}, q_{i+1}, h) = \tilde{\lambda}_{a}\mu^{a}(q_{i}) \\
            & D_{2}L_{d}(q_{i-1},\bar{q},\alpha h) + D_{1}L_{d}(\bar{q},q_{i}, (1-\alpha) h)\in -N_{C}(\bar{q})\cup \mathcal{D}^{o}_{\bar{q}} \\
            & D_{3}L_{d}(q_{i-1},\bar{q},\alpha h) - D_{3}L_{d}(\bar{q},q_{i}, (1-\alpha) h) = 0 \\
            & \bar{q} \in \partial C, \ (q_{i-1},\bar{q}), (\bar{q}, q_{i}), (q_{i},q_{i+1}) \in \mathcal{D}_{d}.
        \end{split}
    \end{equation}
\end{theorem}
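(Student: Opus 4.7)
The plan is to mimic the continuous proof, but carried out on the manifold $\mathcal{M}_d$ and with the summation by parts replacing integration by parts. First I would compute the differential of $S_d:\mathcal{M}_d \to \mathbb{R}$ by taking an arbitrary variation $(\delta \alpha, \delta q_d)$ of the pair $(\alpha, q_d)$ subject to fixed end-points $\delta q_0 = \delta q_N = 0$. Collecting terms by the node they act on yields, up to the standard rearrangement,
\begin{equation*}
\begin{split}
\delta S_d \;=\;& \sum_{k=1,\, k\neq i-1, i}^{N-1}\bigl[D_2 L_d(q_{k-1},q_k,h)+D_1 L_d(q_k,q_{k+1},h)\bigr]\delta q_k \\
& + \bigl[D_2 L_d(q_{i-2},q_{i-1},h) + D_1 L_d(q_{i-1},\bar q,\alpha h)\bigr]\delta q_{i-1}\\
& + \bigl[D_2 L_d(\bar q,q_i,(1-\alpha)h)+D_1 L_d(q_i,q_{i+1},h)\bigr]\delta q_i\\
& + \bigl[D_2 L_d(q_{i-1},\bar q,\alpha h) + D_1 L_d(\bar q,q_i,(1-\alpha)h)\bigr]\delta \bar q\\
& + h\bigl[D_3 L_d(q_{i-1},\bar q,\alpha h) - D_3 L_d(\bar q,q_i,(1-\alpha)h)\bigr]\delta \alpha.
\end{split}
\end{equation*}

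Next I would impose the discrete Lagrange--d'Alembert principle, namely that $\delta S_d$ vanishes for all variations satisfying $\delta q_k\in\mathcal{D}_{q_k}$ and $\delta \bar q\in \mathcal{D}_{\bar q}\cap T_{\bar q}(\partial C)$ (the latter because $\bar q$ is forced to stay on $\partial C$), along with $\delta \alpha$ arbitrary. Varying separately each interior $\delta q_k$ (away from $i-1,i$) in $\mathcal{D}_{q_k}$ recovers the bulk discrete Lagrange--d'Alembert equations \eqref{DLA}, contributing no new information. The variations $\delta q_{i-1}\in\mathcal{D}_{q_{i-1}}$ and $\delta q_i\in \mathcal{D}_{q_i}$ produce the first two conditions of \eqref{dnh:jump:equations} via the standard argument that annihilation on $\mathcal{D}$ is equivalent to lying in $\mathcal{D}^o = \mathrm{span}\{\mu^a\}$, yielding the multipliers $\lambda_a$ and $\tilde\lambda_a$. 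Arbitrary variation in $\delta \alpha$ directly forces the fourth equation (the discrete energy--matching condition).

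The key step is the condition at $\bar q$. Since $\delta \bar q$ is free in $\mathcal{D}_{\bar q}\cap T_{\bar q}(\partial C)$, the bracketed expression must annihilate $\mathcal{D}_{\bar q}\cap T_{\bar q}(\partial C)$, i.e.\ lie in $(\partial C)^o + \mathcal{D}^o$. As in the continuous case, I would follow the paper's convention and write this union loosely as $N_C(\bar q)\cup(-N_C(\bar q))\cup \mathcal{D}^o_{\bar q}$, and then eliminate the $+N_C(\bar q)$ branch by a discrete non-interpenetrability argument: assume for contradiction that
\begin{equation*}
D_2 L_d(q_{i-1},\bar q,\alpha h) + D_1 L_d(\bar q,q_i,(1-\alpha)h) = \lambda\, dg(\bar q), \qquad \lambda \geq 0,
\end{equation*}
use the discrete Legendre transform to translate this into a statement about the discrete pre-- and post--impact velocities approximating $\dot q(\bar t^{\pm})$, and observe that $g(q_{i-1})\le 0$, $g(q_i)\le 0$, $g(\bar q)=0$ force the discrete analogues of $dg(\dot q(\bar t^-))\ge 0$ and $dg(\dot q(\bar t^+))\le 0$; pairing the displayed equation with $dg(\bar q)$ then yields a contradiction in the same manner as for Theorem~1. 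Finally, the third constraint of \eqref{dnh:jump:equations} is inherited from $\bar q\in \partial C$ and the inclusions of pairs in $\mathcal{D}_d$ are the hypotheses.

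The main obstacle I anticipate is making the discrete non-interpenetrability argument clean: in the continuous setting it rests on the one-sided derivatives $dg(\dot q(\bar t^{\pm}))$, whereas here I must translate between the discrete momenta $D_2 L_d,\, -D_1 L_d$ and finite-difference velocities, and then verify that the sign information carried by the inequality $g\le 0$ at the sampled nodes passes to the right side of the identity obtained by pairing with $dg(\bar q)$. Once this sign bookkeeping is handled, the rest of the proof is a direct discrete transcription of the continuous argument. \hfill$\square$
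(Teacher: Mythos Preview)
Your proposal is correct and follows essentially the same route as the paper: compute $\delta S_d$, collect terms node by node, and read off each equation from the appropriate class of variations. The paper's own proof is in fact terser than yours on the sign issue at $\bar q$; it dispatches the exclusion of the $+N_C$ branch in a single clause (``where we used the fact that the jump during the impact must produce a new point $q_i\in C$'') without spelling out the discrete Legendre/finite-difference translation you anticipate, so the obstacle you flag is not resolved in more detail there either.
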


\begin{proof}
    First, we compute the variations of the action map and using $\delta q_{0} = \delta q_{N} = 0$, we obtain that
    \begin{equation*}
        \begin{split}
            \delta S_{d} (\alpha, q_{d}) \cdot (\delta \alpha, \delta q_{d}) = & \sum_{k=1}^{i-2} \left[ D_{2}L_{d} (q_{k-1},q_{k},h)+D_{1}L_{d}(q_{k},q_{k+1},h)\right] \delta q_{k} \\
            + & \sum_{k=i+1}^{N-1} \left[ D_{2}L_{d} (q_{k-1},q_{k},h)+D_{1}L_{d}(q_{k},q_{k+1},h)\right] \delta q_{k} \\
            + & \left[ D_{2}L_{d}(q_{i-2},q_{i-1},h) + D_{1}L_{d}(q_{i-1}, \bar{q}, \alpha h) \right] \delta q_{i-1} \\
            + & \left[ D_{2}L_{d}(q_{i-1},\bar{q},\alpha h) + D_{1}L_{d}(\bar{q},q_{i}, (1-\alpha) h) \right] \delta \bar{q} \\
            + & \left[ D_{2}L_{d}(\bar{q},q_{i},(1-\alpha)h) + D_{1}L_{d}(q_{i}, q_{i+1}, h) \right] \delta q_{i} \\
            + & h \left[ D_{3}L_{d}(q_{i-1},\bar{q},\alpha h) - D_{3}L_{d}(\bar{q},q_{i}, (1-\alpha) h) \right] \delta \alpha.
        \end{split}
    \end{equation*}
Using the facts that $\delta q_{d} \in \mathcal{D}$ and $(q_{k}, q_{k+1})\in \mathcal{D}_{d}$ for all $k\neq i-1$, we immediately get Lagrange-d'Alembert equations \eqref{DLA} for $k\neq i-1, i$ and the first two equations in \eqref{dnh:jump:equations}. From $\delta \bar{q} \in \mathcal{D} \cap T(\partial C)$ we conclude that  
$$D_{2}L_{d}(q_{i-1},\bar{q},\alpha h) + D_{1}L_{d}(\bar{q},q_{i}, (1-\alpha) h) \in \mathcal{D}^{o}\cup (-N_{C}),$$
where we used the fact that the jump during the impact must produce a new point $q_{i} \in C$. Finally, since $\delta \alpha$ is arbitrary we get the last equation in \eqref{dnh:jump:equations}.\hfill$\square$
\end{proof}
\begin{remark}
The Lagrange-d'Alembert equations in the inequality constraint setting may be used as a numerical method to integrate the equations of motion. Given two initial points $q_{0}, q_{1}$ satisfying the discrete constraint $\mathcal{D}_{d}$, we may use discrete Lagrange-d'Alembert equations to obtain the sequence $\{q_{0},\dots, q_{i-1}\}$. Then we use the first equation in \eqref{dnh:jump:equations} to obtain $\bar{q}$ from where we may use the third to obtain $q_{i}$ and then the second to obtain $q_{i+1}$. Then, we may use again Lagrange-d'Alembert equations to integrate the remaining points.\end{remark}

\section{Example}

We will consider the motion of a vertical rolling disk without sliding in a circular table and use our previous construction to find an integrator for the impact time.

The vertical rolling disk is  described by four coordinates: $x, y$ determine the position of the center of mass in the table, $\theta$ indicates the angle that a fixed point in the disk border makes with the vertical axis and $\varphi$ indicates the orientation of the disk with respect to the $x$-axis. Below $m$ is the mass of the disk, $I$ and $J$ are its moments of inertia and $R$ is the disk radius. The dynamics of the vertical rolling disk with unit radius is given by the Lagrangian function
\begin{equation*}
    L=\frac{m}{2}\left( \dot{x}^2 + \dot{y}^2\right) + \frac{I}{2}\dot{\theta}^2 + \frac{J}{2}\dot{\varphi}^2
\end{equation*}
together with the non-slipping constraints $\dot{x}=R\dot{\theta}\cos \varphi$, $\dot{y}=R\dot{\theta}\sin \varphi$ generating the distribution $\mathcal{D}= \left\langle \left\{ \frac{\partial}{\partial \theta} + \cos \varphi \frac{\partial}{\partial x} + \sin \varphi \frac{\partial}{\partial y}, \frac{\partial}{\partial \varphi} \right\} \right\rangle.$

The non-interpenetrability condition of the circular table with radius $a$ implies the inequality constraints $C_{+}$ and $C_{-}$ determined by
$$C_{\pm} = \{ (x,y,\theta,\varphi) \ | (x \pm R\cos \varphi)^{2}+( y \pm R\sin \varphi)^{2} \leqslant a^{2} \}$$
which express the fact that the disk, counting with its radius, cannot leave the table. The proposed integrator uses discrete Lagrange-d'Alembert integrator until the first impact. When we first obtain a non-admissible solution, we switch to the impact equations to determine the exact impact point. Afterwards, we return to DLA scheme until the next impact. The discrete Lagrangian used is
$$L_{d}=\frac{m}{2h}\left((x_{1}-x_{0})^{2} + (y_{1}-y_{0})^{2}\right) + \frac{I}{2h}(\theta_{1}-\theta_{0})^{2} + \frac{J}{2h}(\varphi_{1}-\varphi_{0})^{2}$$
and the discrete constraint was
$$x_{1}-x_{0} = R \cos(\frac{\varphi_{1}+\varphi_{0}}{2})(\theta_{1}-\theta_{0}), \quad y_{1}-y_{0} = R \sin(\frac{\varphi_{1}+\varphi_{0}}{2})(\theta_{1}-\theta_{0}).$$
Near an impact point with, for instance, $\partial C_{+}$ the first equation in \eqref{dnh:jump:equations} is used together with the constraints $\bar{q}\in \partial C_{+}$ and $(q_{i-1},\bar{q})\in \mathcal{D}_{d}$ to obtain $\bar{q}, \alpha$ and the multiplier $\lambda_{a}$. Next, using these variables, we use the third  and fourth equations in \eqref{dnh:jump:equations}, together with the constraint $(\bar{q},q_{i})\in \mathcal{D}_{d}$ to obtain $q_{i}$ and the multipliers.
Finally, we use the previous variables to obtain $q_{i+1}$ and the multiplier $\tilde{\lambda}_{a}$ from the second equation in \eqref{dnh:jump:equations} together with the discrete constraints.






Our integrator behaves as expected before the first impact. During the impact time, it is able to deal sucessfully with the impact and produce an admissible trajectory. However, it has proven to be unable to preserve energy thus introducing artificial energy drift into the problem.

This fact shows that further study should be made to look for algorithms that preserve the impact structure. In particular, a promising direction is the extension of DELI integrators to nonholonomic systems.


\end{document}